\documentclass[a4paper, 11pt]{amsart}
\pdfoutput=1

\usepackage{amssymb}
\usepackage{amsfonts}
\usepackage{amsthm}
\usepackage{stmaryrd}
\usepackage[T1]{fontenc}
\usepackage{kpfonts} 
\usepackage[mathscr]{eucal}
\usepackage[english]{babel}
\usepackage[margin=1in]{geometry}
\usepackage{tikz-cd}
\usepackage{tikz}
\usetikzlibrary[trees]
\usepackage{enumitem}
\usepackage{hyperref}
\usepackage{aliascnt}
\usepackage{adjustbox}
\usepackage{varioref}
\usepackage{xstring}
\usepackage{xcolor}
\usepackage{bbm}

\newtheorem{thmintro}{Theorem}

\newcommand{\inewtheorem}[2]{
	\newaliascnt{#1}{thmintro}
	\newtheorem{#1}[#1]{#2}
	\aliascntresetthe{#1}
}
\inewtheorem{propintro}{Proposition}
\inewtheorem{corintro}{Corollary}

\newcommand{\jnewtheorem}[2]{
	\newaliascnt{#1}{theorem}
	\newtheorem{#1}[#1]{#2}
	\aliascntresetthe{#1}
}
\numberwithin{theorem}{section}
\jnewtheorem{lemma}{Lemma}
\jnewtheorem{proposition}{Proposition}
\jnewtheorem{corollary}{Corollary}
\jnewtheorem{conjecture}{Conjecture}

\theoremstyle{definition}
\jnewtheorem{definition}{Definition}
\jnewtheorem{example}{Example}
\jnewtheorem{remark}{Remark}
\jnewtheorem{question}{Question}
\jnewtheorem{fact}{Fact}
\jnewtheorem{construction}{Construction}
\jnewtheorem{notation}{Notation}
\jnewtheorem{convention}{Convention}

\labelformat{theorem}{Theorem #1}
\labelformat{lemma}{Lemma #1}
\labelformat{proposition}{Proposition #1}
\labelformat{corollary}{Corollary #1}
\labelformat{definition}{Definition #1}
\labelformat{example}{Example #1}
\labelformat{section}{Section #1}
\labelformat{subsection}{Subsection #1}
\labelformat{equation}{(#1)}
\labelformat{remark}{Remark #1}
\labelformat{exercise}{Exercise #1}
\labelformat{question}{Question #1}
\labelformat{chapter}{Chapter #1}
\labelformat{construction}{Construction #1}
\labelformat{conjecture}{Conjecture #1}
\labelformat{notation}{Notation #1}
\labelformat{convention}{Convention #1}

\labelformat{thmintro}{Theorem #1}
\labelformat{corintro}{Corollary #1}
\labelformat{propintro}{Corollary #1}

\numberwithin{table}{subsection}
\labelformat{table}{Table #1}

\hypersetup{
	pdftitle={},
	pdfauthor={Gregoire Marc},
	bookmarksnumbered=true,     
	bookmarksopen=true,         
	bookmarksopenlevel=1,       
	colorlinks=true,   
	linkcolor = {cyan!60!black},   
	urlcolor = {cyan!60!black},   
	citecolor = {cyan!60!black},   
	pdfencoding=unicode,      
	pdfstartview=Fit,           
	pdfpagemode=UseOutlines,    
	pdfpagelayout=OneColumn,
	breaklinks=true,
	linktocpage
}

\def\NN{{\mathbb{N}}}

\def\sP{{\mathscr{P}}}
\def\sS{{\mathscr{S}}}

\def\cA{{\mathcal{A}}}

\def\cC{{\mathcal{C}}}

\def\cF{{\mathcal{F}}}

\def\cO{{\mathcal{O}}}

\def\cP{{\mathcal{P}}}

\def\bigpp{\mathop{\mathchoice{\dobigpp\Huge}{\dobigpp\Large}{\dobigpp\normalsize}{\dobigpp\small}}}
\mathchardef\bigppchar"1403
\def\dobigpp#1{\vcenter{#1\kern.2ex\hbox{$\bigppchar$}\kern.2ex}}
\mathchardef\bigppdomchar"1400
\def\dobigppdom#1{\vcenter{#1\kern.2ex\hbox{$\bigppdomchar$}\kern.2ex}}

\mathchardef\pp"2403 
\mathchardef\ppdom"2400 

\newcommand\colim{\operatorname{colim}}

\newcommand\Id[1]{\operatorname{id}_{#1}}
\newcommand\Set{\operatorname{Set}}

\newcommand\fun{\operatorname{Fun}}

\newcommand\Arr{\operatorname{Arr}}

\newcommand\sSet{\operatorname{sSet}}
\newcommand\Algg{\operatorname{Alg}}

\newcommand\coll[1]{\operatorname{Coll}_{#1}}

\newcommand\Fin{\operatorname{Fin}}

\title{On sifted homotopy colimits of algebras over an $N_{\infty}$-operad}
\author{Gr\'{e}goire Marc}

\begin{document}
\maketitle

\begin{quote} 
\begin{center}
\textbf{Abstract} 
\end{center}
We prove that the forgetful functor from algebras over an $N_{\infty}$-operad to equivariant spaces preserves sifted homotopy colimits. In particular, we provide equivariant generalisations to results of Pavlov and Scholbach \cite{PS1} and Berger and Moerdijk \cite{BM03,BM07}.
\end{quote}

\tableofcontents

\section{Introduction}
Operads were introduced by May in \cite{May} to study infinite loop spaces, and they play a central role in the study of homotopical algebraic structures. The theory of operads and their algebras admits two main formalisms, namely $\infty$-operads \cite{HA} and topological operads \cite{May}. These two approaches, along with others such as dendroidal sets \cite{CisMoer}, have been compared in the work of Barwick \cite{Bar2}, Cisinski and Moerdijk \cite{cismo, CisMoer2}, and Chu, Haugseng, and Heuts \cite{CHH}.

The theories of $\infty$-operads and topological operads both offer different perspectives for homotopical algebra. The entire development of higher algebra by Lurie in \cite{HA} is based on $\infty$-operads, as they are more suited for categorical considerations. The importance of topological operads is more concrete and lies in their direct connection with topology and geometry. In particular, examples of such operads arise geometrically, such as with little discs operads which have a direct connection with configuration spaces \cite{May}.

The homotopy theory of algebras over a (topological) operad was developed and studied by Berger and Moerdijk in \cite{BM03,BM07}. An important player in this study is the notion of \emph{$\Sigma$-cofibrant operads}. The $\Sigma$-cofibrancy of a given operad $\cO$ is a technical condition that ensures the good behavior of the homotopy theory of its algebras. One important consequence of this is the rectification result of Pavlov and Scholbach \cite{PS1}, which states that the homotopy theory of algebras over a $\Sigma$-cofibrant operad can be formally compared with the algebras over the associated $\infty$-operad. This applies in particular to $E_{\infty}$-operads, which are encoding infinite loop spaces.

Let $G$ be a finite group. The theory of $N_{\infty}$-operads was introduced by Blumberg and Hill in \cite{BH1} as an equivariant generalisation of the theory of $E_{\infty}$-operads. Unlike in the $E_{\infty}$ case,
$N_{\infty}$-operads do not form a contractible space; rather, they assemble into a poset, which can be described in terms of \emph{$G$-indexing systems}, by work of Blumberg and Hill \cite{BH1}, Bonventre and Pereira \cite{BonPer1}, and Rubin \cite{Rub1}. More generally, one can consider equivariant topological operads (and their algebras), which provides interesting examples, such as equivariant little discs operads \cite{Hill2022}. As in the non-equivariant setting, there is also a notion of $G$-$\infty$-operads \cite{DJ} along with  nerve functors \cite{Bon} relating equivariant topological operads and their algebras with their $\infty$-categorical analogues.

However, these comparison functors are not yet known to be equivalences, even under some assumptions on the operads. In particular, $N_{\infty}$-operads do not yet admit a rectification result for their algebras. A first problem that arises when one attempts to prove such a rectification for $N_{\infty}$-operads is that these operads are, in general, not $\Sigma$-cofibrant. This implies in particular that all the results of \cite{BM03} and \cite{PS1} cannot be applied in the proof of a potential rectification. The goal of this paper is to generalise certain results of \cite{BM03} and \cite{PS1} to a broader class of equivariant operads which contains $N_{\infty}$-operads. More precisely, we prove the following theorem.

\begin{thmintro}[\protect{\ref{nsifco}}]\label{mainTH}
Let $\cO$ be a $G$-simplicial operad. We denote by $\sS^G$ the $\infty$-category of equivariant spaces, by $\sSet^G$ the category of $G$-simplicial sets, and by $W_G$ the class of morphisms of $\cO$-algebras which forget to $G$-weak equivalences.
If for every $n\ge 0$, $\cO_n$ has a free $\Sigma_n$-action, then the forgetful functor
$$
U\colon \Algg_{\cO}\left(\sSet^G\right)\left[W_G^{-1}\right] \to \sS^G
$$
preserves sifted colimits.
\end{thmintro}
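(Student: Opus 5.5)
The plan is to reduce the statement to two standard ingredients: a model-categorical presentation of sifted homotopy colimits, and a homotopical analysis of the free-algebra monad for operads with $\Sigma_n$-free components.

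\textbf{Step 1: a model structure.} Equip $\Algg_{\cO}(\sSet^G)$ with the transferred model structure along the free--forgetful adjunction $F \dashv U$ with $\sSet^G$. Here $\sSet^G$ carries the genuine (fixed-point) model structure, so weak equivalences and fibrations are detected on $H$-fixed points for every $H\le G$. Since $\sSet^G$ is cofibrantly generated and $U$ preserves filtered colimits, transfer reduces to the path-object argument: simplicial sets admit functorial path objects compatible with finite products, and these lift to algebras. Hence the localization $\Algg_{\cO}(\sSet^G)[W_G^{-1}]$ is presented by a simplicial model category, and $U$ is a right Quillen functor that preserves all weak equivalences.

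\textbf{Step 2: reduce to two cases.} Sifted $\infty$-colimits are generated by filtered colimits and geometric realizations of simplicial objects. So it suffices to show that $U$ preserves:
\begin{enumerate}
\item filtered homotopy colimits;
\item homotopy colimits over $\Delta^{op}$.
\end{enumerate}
For (1), filtered colimits of algebras are computed underlying, because the monad $\cO(-)=\coprod_n \cO_n\times_{\Sigma_n}(-)^n$ commutes with filtered colimits. Filtered colimits in $\sSet^G$ are homotopy colimits and commute with fixed points, so $U$ preserves them.

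For (2), take a Reedy cofibrant simplicial algebra $A_\bullet$; its homotopy colimit is the realization $|A_\bullet|$ computed in algebras. The key observation is that the diagonal $\Delta^{op}$ is sifted, so the diagonal functor $(-)^n$ commutes with realization: $|X_\bullet|^n\cong|X_\bullet^n|$ in simplicial sets, levelwise. Using this, one checks that the realization (diagonal) of a simplicial $\cO$-algebra, computed in $\sSet^G$, already carries an $\cO$-algebra structure and is the colimit in algebras. Thus $U|A_\bullet|$ is the diagonal of $UA_\bullet$.

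\textbf{Step 3: the main obstacle.} It remains to see that this diagonal computes the homotopy colimit in $\sSet^G$ of $UA_\bullet$, i.e.\ that $UA_\bullet$ is sufficiently cofibrant, or alternatively that the diagonal is homotopy invariant. The diagonal of a bisimplicial $G$-set is always a homotopy colimit levelwise, and taking $H$-fixed points commutes with it, so it is homotopy invariant in any case. The genuine issue is therefore not Step 2 but ensuring the identification of $\infty$-categorical colimits: the realization in the model category of algebras must agree with the homotopy colimit. This requires the realization of Reedy cofibrant diagrams to be homotopy invariant among cofibrant algebras.

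To control $U$ on cofibrant algebras, I would use the freeness of the $\Sigma_n$-actions on $\cO_n$. For $H\le G$, the fixed points
\[
\bigl(\cO_n\times_{\Sigma_n}X^n\bigr)^H
\]
decompose over graph subgroups $\Gamma\le G\times\Sigma_n$ as coproducts of $(\cO_n^{\Gamma}\times (X^n)^{\Gamma})/W$. Here the freeness of the $\Sigma_n$-action makes the relevant Weyl-group actions free. Hence these functors preserve weak equivalences between all objects, not merely cofibrant ones. Running the usual filtration of pushouts $A\to A\amalg_{F(K)}F(L)$ of algebras then shows that $U$ sends cofibrant algebras to objects for which the pushout filtration is homotopically well behaved.

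Combining these, the left derived realization agrees with the underlying diagonal. This finishes (2), and with (1) the statement follows.
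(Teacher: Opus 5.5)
Your argument is correct in substance and takes a different route from the paper. However, your diagnosis in Step 3 is off, and once you correct it the freeness hypothesis is never used.

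The point you call the genuine issue is that $|A_\bullet|$ computes the homotopy colimit when $A_\bullet$ is Reedy cofibrant. This holds in any simplicial model category: realization is left Quillen for the Reedy structure and computes the homotopy colimit on Reedy cofibrant objects. The transferred structure on $\Algg_{\cO}(\sSet^G)$ is simplicial, since cotensors are formed in $\sSet^G$. You already used this fact in Step 2. So the final paragraph (graph subgroups, free Weyl actions, a pushout filtration that is ``homotopically well behaved'') should be deleted. As written it proves nothing precise, but nothing depends on it.

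What does deserve a real argument is $|A_\bullet|\cong\operatorname{diag}(A_\bullet)$ in algebras. Realization is a coend, not a colimit over $\Delta^{op}$, so siftedness is not the reason. The actual argument is as follows:
\begin{enumerate}
\item $\operatorname{diag}$ is restriction along $\Delta\to\Delta\times\Delta$, hence commutes with products, so $\operatorname{diag}(A_\bullet)$ is an $\cO$-algebra.
\item One checks directly that the usual bijection $\hom(\operatorname{diag}A_\bullet,Y)\cong\hom(A_\bullet,Y^{\Delta^\bullet})$ matches algebra maps with levelwise algebra maps.
\end{enumerate}
Combine this with two further facts. Every simplicial object of $\sSet^G$ is Reedy cofibrant for the genuine structure, because cofibrations are exactly the monomorphisms. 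And $(\operatorname{diag}X)^H=\operatorname{diag}(X^H)$. Together these show that $U$ preserves geometric realizations for an \emph{arbitrary} $G$-simplicial operad. Your proof therefore yields a more general statement than the one asked for, and you should say so explicitly.

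The paper proceeds quite differently. Rather than splitting sifted colimits into filtered colimits and geometric realizations, it treats an arbitrary homotopy sifted $I$ with finite coproducts. It computes homotopy colimits as strict colimits of projectively cellular diagrams $F\colon I\to\Algg_{\cO}(\sSet^G)$. Because $U$ does not preserve projective cofibrancy (the $C_2$ counterexample of Section 2), it shows instead two things: each $U(F)^H$ is projectively cofibrant in $\fun(I,\sSet)$, and $\colim$ commutes with $(-)^H$. This goes through three ingredients:
\begin{enumerate}
\item strong fixed cofibrations;
\item the cellularity of $\cO_n$ in the graph model structure, which is the only place freeness enters;
\item the Berger--Moerdijk enveloping operad $\cO[A]$ with $\cO[A](0)=A$.
\end{enumerate}

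Your route replaces all of this with two features special to $\sSet^G$. First, realization is the diagonal, which commutes with finite products and with fixed points. Second, Reedy cofibrancy is automatic. Your approach is much shorter and drops the hypothesis. The paper's method buys point-set control over general sifted diagrams: cofibrancy of fixed points of underlying diagrams, and commutation of colimits with fixed points. That kind of argument is what one would need to adapt to monoidal model categories where realizations are not computed on underlying objects.
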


\ref{mainTH} is a generalisation of \cite[Theorem 7.9]{PS1}. Note that \cite[Theorem 7.9]{PS1} can also be applied to equivariant operads but not $N_{\infty}$-operads, as they are not $\Sigma$-cofibrant (see \ref{Ninfnotscof}). In particular, the proof of \ref{mainTH} differs from the classical proof of \cite[Theorem 7.9]{PS1} and relies heavily on equivariant techniques. As we will see in \ref{cex}, some important results of \cite{BM03} do not extend to $N_{\infty}$-operads, which complicates the proof of \ref{mainTH}.

The result of this paper is used in \cite{greg1} to prove a rectification for algebras over $N_{\infty}$-operads, thus correcting the original version of this paper, which contained a mistake. Although \ref{mainTH} should be a first step towards a more general rectification for algebras over $G$-operads, the author does not know how to obtain such a rectification in full generality. 

\hspace{2cm}

\subsection*{Acknowledgements}
\addtocontents{toc}{\protect\setcounter{tocdepth}{1}}
I would like to thank my PhD supervisor, Magdalena Kędziorek, for her continuous support and for her valuable comments on earlier drafts of this paper. This work was supported by EPSRC grant EP/Z000580/1. During the writing of this paper, the author was supported by an NWO grant Vidi.203.004.

\section{Counter example}\label{cex}
In this section, we enlighten the fact that the results of \cite{BM03} cannot be genralised to $N_{\infty}$-operads. In particular, after proving that $N_{\infty}$-operads are not $\Sigma$-cofibrant in the sense of \cite{BM03}, we show that \cite[Corollary 5.5]{BM03} is false for $N_{\infty}$-operads.
Let $I$ be a small category and consider the equivariant model structure on $\fun(I,\sSet^G)$ obtained from the projective model structure on $\fun(I,\sSet)$. Note that this model structure exists using \cite[Lemma 2.12]{STE}.

\begin{proposition}\label{Ninfnotscof}
Let $\cO$ be an $N_{\infty}$-operad in $\sSet^G$. If the $G$-indexing system associated with $\cO$ is not trivial, then $\cO$ is not $\Sigma$-cofibrant.
\begin{proof}
The projective model structure on $\left(\sSet^G\right)^{\Sigma_n}$ corresponds to the model structure on $\sSet ^{G\times \Sigma_n}$ given by the family $\cF$ of $G\times \Sigma_n$ spanned by the subgroups $H\times \{e\}$, with $H$ a subgroup of $G$. If $\cO$ is not a trivial $N_{\infty}$-operads, there exists a graph subgroup $\Gamma \subseteq G\times \Sigma_n$ for some $n$ such that $\Gamma \notin \cF$ and such that $\cO_n^{\Gamma} \neq \emptyset$.  Using \cite[Proposition 2.16]{STE}, we deduce that $\cO_n$ is not cofibrant in $\sSet^{G\times \Sigma_n}$ for the model structure corresponding to $\cF$. In particular, this implies that $\cO$ is not $\Sigma$-cofibrant.
\end{proof}
\end{proposition}

\begin{proposition}\label{descof}
For every cellular map $f$ in $\fun (I,\sSet^G)$, the map $f(n) \in \fun(I,\Set^G)$ is of the form
\begin{equation}\label{form}
X \to X \sqcup \left(\bigsqcup_{j\in J} G/H_j \times \hom(A_j,(-))\right).
\end{equation}
In particular, cellular objects are of the form
$$
 \bigsqcup_{j\in J} G/H_j \times \hom(A_j,(-)).
$$
\begin{proof}
Sequential colimits of maps of the form \ref{form} also have this form and it is enough to show that for every pushout square
\begin{equation}\label{sqf}
\begin{tikzcd}
G/H \times \hom(i,(-))\times \partial \Delta^n \arrow[r] \arrow[d] &  X \arrow[d,"f"] \\
G/H \times \hom(i,(-)) \arrow[r] \times \Delta^n & Y,
\end{tikzcd}
\end{equation}
the map $f(n)$ is of the form \ref{form}. If we specify the square \ref{sqf} at some simplicial degree $k$, we obtain a pushout square
$$
\begin{tikzcd}
\bigsqcup_{\partial \Delta^n (k)} G/H \times \hom(i,(-))  \arrow[r] \arrow[d] &  X(k) \arrow[d,"f"] \\
\bigsqcup_{\Delta^n (k)} G/H \times \hom(i,(-)) \arrow[r]  & Y(k)
\end{tikzcd}
$$
and because the map of sets $\partial \Delta^n (k) \hookrightarrow \Delta^n(k)$ is an inclusion, we deduce that $Y(k)$ is the coproduct 
$$
X(k) \sqcup \left( \bigsqcup_{\Delta^n (k)\backslash \partial \Delta^n (k) }G/H \times \hom(i,(-))\right)
$$
and that $f(k)$ corresponds to the associated summand inclusion.
\end{proof}
\end{proposition}

\begin{proposition}\label{stabcof}
For every cofibrant $X$ in $\fun (I,\sSet^G)$, for every $f\colon i \to j$ in $I$ and every $n$, $X(f)(n) \colon X(i)(n) \to Y(i)(n)$ preserves the stabilisers of elements.
\begin{proof}
Note first that every retract of an object $X$ in $\fun (I,\sSet^G)$ that satisfies the condition from the statement also satisfies this condition. The result now follows from the fact that cellular objects clearly satisfy the condition from the statement using \ref{descof}.
\end{proof} 
\end{proposition}

\begin{proposition}\label{countexninf}
Let $\cO$ be an $N_{\infty}$-operad for $C_2$ with a complete indexing system. The forgetful functor $U\colon \Algg_{\cO}\left(\fun(\Fin, \sSet)^{C_2} \right) \to \fun(\Fin, \sSet)^{C_2}$ does not preserve cofibrant objects.
\begin{proof}
Consider the graph subgroup $\Gamma \subseteq C_2 \times \Sigma_2$ given by the diagonal. The simplicial set $\cO_2^\Gamma$ is contractible and, in particular, there is a $\Gamma$-fixed point in the set $\cO_2(0)$. Because the $\Sigma_2$-action on $\cO_2$ is free, we obtain that the corresponding $C_2 \times \Sigma_2$-equivariant map $(C_2 \times \Sigma_2)/\Gamma \to \cO_2(0)$ is injective. Define $X$ as the free $\cO$-algebra on $\hom(*,(-))$ seen as a discrete simplicial presheaf. We obtain a $C_2$-monomorphism
$$
\hom(\{1,2\},(-))\simeq (C_2 \times \Sigma_2)/\Gamma\times_{\Sigma_2} \hom(*,(-))^2 \hookrightarrow \cO_2(0) \times_{\Sigma_2} \hom(*,(-))^2
$$
where the last term is a $C_2$-subset of $U(X)_0$ and where the $C_2$-action on $\hom(\{1,2\},(-))$ is induced by the free $C_2$-action on $\{1,2\}$. The identity map $\Id{\{1,2\}}$ seen as an element of $\hom(\{1,2\},\{1,2\})$ has the trivial group as stabiliser and the unique map $\{1,2\} \to *$ seen as an element of $\hom(\{1,2\},*)$ has $C_2$ as stabiliser. This contradicts \ref{stabcof} and it follows, as $\hom(\{1,2\},(-))$ is a $C_2$-subobject of $U(X)_0$, that $U(X)$ is not cofibrant. 
\end{proof}
\end{proposition}

\section{Cellular extensions of strong fixed cofibrant operads}\label{celexop}

Until the end of this paper, $I$ is a small sifted category with finite coproducts. The category $\fun(I,\sSet)$ will always be endowed with the projective model structure.

\begin{definition}\label{defgraphcof}
For every $n\ge 0$, the \emph{graph model structure} on $\fun(I,\sSet)^{G\times \Sigma_n}$ is the model structure induced by the projective model structure on $\fun (I,\sSet)$ and associated with the family of graph subgroups of $G\times \Sigma_n$, which exists using \cite[Lemma 2.12]{STE}. We also call the corresponding model structure on collections in $\fun(I,\sSet)^G$ the graph model structure. An operad $\cO$ in $\fun(I,\sSet^G)$ is \emph{graph $\Sigma$-cofibrant} if its identity map $\eta_{\cO}\colon 1 \to \cO$ is a graph cofibration of collections.
\end{definition}

\begin{remark}
Note that because $I$ admits coproducts, $\fun(I,\sSet)$ is a \newline monoidal model category.   
\end{remark}

\begin{definition}
A morphism $f\colon A \to B$ in $\fun(I,\sSet)^G$ is a \emph{fixed cofibration} if, for every subgroup $H$ of $G$, the following conditions are satisfied:
\begin{enumerate}
    \item the morphism $f^H \colon A^H \to B^H$ is a cofibration in $\fun(I,\sSet)$;
    \item the colimit of $f$, seen as a functor $I \to \Arr(\sSet)^G$, commutes with $H$-fixed points.
\end{enumerate}
\end{definition}

\begin{proposition}\label{psrainy}
The class of fixed cofibrations is closed under pushouts and transfinite compositions.
\begin{proof}
We prove first the result for transfinite compositions. 
The point (1) is a direct consequence of the fact that fixed points commute with sequential colimits and of the fact that transfinite compositions of cofibrations are cofibrations. For the point (2), this follows from the fact that colimits commute with colimits and from the fact that fixed points commute with sequential colimits. For pushouts, the proof is similar and uses the fact that the underlying map (if we forget the $G$-action) of a fixed cofibration is a monomorphism and the fact that fixed points commute with pushouts along monomorphisms in any presheaf category.
\end{proof}
\end{proposition}

\begin{proposition}
Every cellular map in $\fun(I,\sSet)^G$ (for the model structure of \ref{defgraphcof}) is a fixed cofibration. 
\begin{proof}
Using \ref{psrainy}, it is enough to prove the result for generating cofibrations. Consider $f\times G/H$ with $f$ a cofibration in $\fun(I,\sSet)$. We have $(G/H \times f)^K\simeq f\times (G/H)^K$ which is a cofibration as $f$ is a cofibration. Moreover, $\colim(G/H \times f)^K\simeq (G/H)^K\times \colim f \simeq \colim ((G/H)^K\times  f)$.
\end{proof}
\end{proposition}

\begin{notation}
For every partition $\sP=(U_1,\ldots,U_k)$ of $\{1,\ldots,n\}$, we denote by $\Sigma_{\sP}$ the subgroup of $\Sigma_n$ spanned by the permutations that preserve $\sP$.
\end{notation}

\begin{definition}
A \emph{marked partitioned tree} is a marked tree $T$ (in the sense of \cite[Section 5]{PS1}) with a partition $\sP$ of the set of its leaves.
\end{definition}

For a given marked partitioned tree $T$ with partition $\sP$, we denote by $\operatorname{Aut}_{\sP}(T)$ the subgroup of $\operatorname{Aut}(T)$ spanned by the automorphisms of $T$ that preserve $\sP$.

\begin{remark}
The subgroup $\operatorname{Aut}_{\sP}(T)$ of $\operatorname{Aut}(T)$ is the inverse image of $\Sigma_{\sP}$ by the homomorphism $\alpha\colon \operatorname{Aut}(T) \to \Sigma_n$ that sends an automorphism of $T$ to the associated permutation of its leaves.
\end{remark}

\begin{construction}\label{decomptree}
Consider a partitioned marked tree $T$ with partition $\sP$ and suppose we have the grafting operation $t_m(T_1,\ldots,T_m)$ representing $T$. We can endow every $T_i$ with a partition $\sP_i$ induced $\sP$. As in \cite[Lemma 5.9]{BM03}, we can partition the set $\{T_1,\ldots,T_m \}$ into subsets of pairwise isomorphic partitioned marked trees $\{T^1_{1},\ldots,T^1_{m_1} \}\cup \cdots \cup \{T^k_{1},\ldots,T^1_{m_k} \}$ with $m_1 + \cdots + m_k =m$. We finally obtain that 
$$
\operatorname{Aut}_{\sP}(T)\simeq \left(\operatorname{Aut}_{\sP_1}(T_1)^{m_1}\times \cdots \times  \operatorname{Aut}_{\sP_k}(T_k)^{m_k} \right) \rtimes \left( \Sigma_{m_1}\times \cdots \times \Sigma_{m_k} \right).
$$
\end{construction}

\begin{notation}
Given a partition $\sP=(U_1,\ldots,U_k)$ of $\{1,\ldots, n\}$ with $|U_i|=n_i$ and a tuple $g=(g_1,\ldots,g_k)$ with $g_i\colon A_i \to B_i$ a map in $\fun (I,\sSet)^G$, we denote by $\bigpp_{\sP}f$ the $\Sigma_{\sP}$-object
$$
\bigpp_{\sP}g:=\bigpp_{i=1}^k g_i^{\square^{n_i}},
$$
where $\square$ denotes the pushout-product.
\end{notation}

\begin{definition}
A morphism $f\colon A \to B$ in $\fun(I,\sSet)^{G\times \Sigma_n}$ is a \emph{strong fixed cofibration} if for every partition $\sP=(U_1,\ldots,U_k)$ of $\{1,\ldots,n\}$ with $|U_i|=n_i$ and for every tuple $g=(g_1,\ldots,g_k)$ of fixed cofibrations, the morphism 
$$
\left(f\square \bigpp_{\sP}  g\right)_{\Sigma_{\sP}}
$$
is a fixed cofibration, where $\Sigma_{\sP}$ acts on $f$ by restriction of the $\Sigma_n$-action. A morphism $f\colon A \to B$ of collections in $\fun(I,\sSet)^G$ is a \emph{strong fixed cofibration} if $f(n)\colon A(n) \to B(n)$ is a strong fixed cofibration for every $n$ in $\mathbb{N}$.
An operad $\cO$ in $\fun(I,\sSet)^G$ is a \emph{strong fixed cofibrant operad} if the morphism of collections $\eta_{\cO}\colon 1 \to \cO$ provided by the identity of $\cO$ is a strong fixed cofibration.
\end{definition}

\begin{proposition}\label{staco}
The class of strong fixed cofibrations is closed under pushouts and transfinite compositions.
\begin{proof}
We prove first the result for pushouts. Consider a strong fixed cofibration $f\colon A \to B$ and a pushout $g\colon C \to D$ of $f$. Using \cite[Proposition 3.1.5]{PS1}, we obtain that $g \square \bigpp_{\sP}h$ is a pushout of $f \square \bigpp_{\sP}h$ and, in particular, $\left(g \square \bigpp_{\sP}h\right)_{\Sigma_{\sP}}$ is a pushout of $\left(f \square \bigpp_{\sP}h\right)_{\Sigma_{\sP}}$. The result then follows from \ref{psrainy}. We prove now the result for transfinite compositions. Note first that, using \cite[Proposition 3.1.6]{PS1}, strong fixed cofibrations are closed under finite compositions. Suppose that $f\colon A_0 \to A$ is the transfinite composition of 
$$
A_0 \overset{f_0}{\longrightarrow} A_1 \overset{f_1}{\longrightarrow}A_2 \overset{f_2}{\longrightarrow} \cdots
$$
where each $f_i$ is a strong fixed cofibration. We denote by $F_i\colon A_0 \to A_{i+1}$ the composition of 
$$
A_0 \overset{f_0}{\longrightarrow} A_1 \overset{f_1}{\longrightarrow} \cdots \overset{f_i}{\longrightarrow} A_{i+1}.
$$
As every $f_i$ is a strong fixed cofibration, every $F_i$ is also a strong fixed cofibration. Finally, the pushout-product $f\square \bigpp_{\sP} h$ is the sequential colimit 
$$
\colim_{i \in \NN} \left( F_i \square \bigpp_{\sP} h \right)
$$
and in particular $\left(f\square \bigpp_{\sP} h\right)_{\Sigma_{\sP}}$ is the colimit
$$
\colim_{i \in \NN}\left( \left( F_i \square \bigpp_{\sP} h \right)_{\Sigma_{\sP}}\right).
$$
As the colimit of every $(F_i \square \bigpp_{\sP} h)_{\Sigma_{\sP}}$ commutes with fixed points, we deduce that the colimit of $\left(f\square \bigpp_{\sP} h\right)_{\Sigma_{\sP}}$ also commutes with fixed points using that sequential colimits commute with fixed points.
We now prove that the sequential diagram $i \mapsto \left( F_i \square \bigpp_{\sP} h \right)_{\Sigma_{\sP}}^H$ is a cofibration in $\fun (\NN, \fun(I,\sSet))$, where $\NN$ is seen as a category via its poset structure. In particular, this will imply that $\left(f\square \bigpp_{\sP} h\right)_{\Sigma_{\sP}}^H$ is a cofibration. We denote respectively by $B$ and $C$ the source and target of $\bigpp_{\sP}h$. Using the classical description of cofibrations in $\fun (\NN, \fun(I,\sSet))$ in terms of the Reedy model structure, we have to prove that the morphism
\begin{equation}\label{mor}
\left(
\left(A_n \times C \right) \cup_{\left(A_n \times B \cup_{A_0\times B}A_0 \times C  \right)} \left(A_{n+1}\times B \cup_{A_0 \times B} A_0 \times C \right) \right)_{\Sigma_{\sP}}^H \to \left( A_{n+1}\times C \right)_{\Sigma_{\sP}}^H
\end{equation}

induced by the square
$$
\begin{tikzcd}
\left(A_n \times B \cup_{A_0\times B}A_0\times C \right)_{\Sigma_{\sP}}^H  \arrow[r] \arrow[d,"\left(F_n \square \bigpp_{\sP}h\right)_{\Sigma_{\sP}}^H"'] & \left( A_{n+1}\times B \cup_{A_0 \times B} A_0 \times C \right)_{\Sigma_{\sP}}^H \arrow[d,"\left(F_{n+1}\square \bigpp_{\sP}h\right)_{\Sigma_{\sP}}^H"] \\
\left( A_n \times C \right)_{\Sigma_{\sP}}^H \arrow[r,""'] & \left( A_{n+1}\times C \right)_{\Sigma_{\sP}}^H
\end{tikzcd}
$$
is a cofibration. Using that pushouts commute we pushouts, the morphism \ref{mor} can be identified with $\left( f_n \square \bigpp_{\sP}h \right)_{\Sigma_{\sP}}^H$ and the result follows from the fact that $f_n$ is a strong fixed cofibration.
\end{proof}
\end{proposition}

\begin{proposition}\label{twistedpushprod}
For every partition $\sP=(U_1,\ldots,U_k)$, for every tuple $g=(f_1,\ldots,f_k)$ of fixed cofibrations and for every group homomorphism $\alpha\colon H \to \Sigma_{\sP}$, the map
$$
\alpha^* \left(\bigpp_{\sP} f\right)
$$
is a fixed cofibration. 
\begin{proof}
The homomorphism $\alpha\colon H \to \Sigma_{\sP}$ gives an $H$-set $T$, which we can decompose $T$ into orbits as $T\simeq H/K_1 \sqcup \cdots \sqcup H/K_l$. In particular, the homomorphism $\alpha$ factors as a homomorphism $\alpha\colon H \to \Sigma_{[H:K_1]}\times \cdots \times \Sigma_{[H:K_l]}$ and we denote by $\alpha_i \colon H \to \Sigma_{[H:K_i]}$ the associated homomorphism. Using the fact that fixed points commute with products and that pushouts along a monomorphism commute with fixed points, we deduce that 
$$
\alpha^* \left(\bigpp_{\sP} f\right)^H \simeq \bigpp_{i=1}^l \alpha_i^* \left(f_i^{\square^{[H:K_i]}} \right)^H
$$
with $f_i$ the component of $f$ for which $H/K_i \subset U_i$.
If we assume that for every $i=1,\ldots, l$, $ \alpha_i^* \left(\bigpp_{\sP_i} f\right)$ is a fixed cofibration, we obtain that 
 $\alpha^* \left(\bigpp_{\sP} f\right)$ is also a fixed cofibration using that pushout-products of cofibrations are cofibrations and the fact that the functor 
$$
\square \colon \Arr\left(\fun(I,\sSet)^G\right)^2 \to \Arr\left(\fun(I,\sSet)^G\right)
$$
preserves sifted colimits.
We can now asumme that $T=H/K$ and that $\alpha$ is a homomorphism $\alpha\colon H \to \Sigma_{n}$ with $n=[H:K]$. We now prove that
$$
\alpha^* \left( f^{\square^n}\right)^H\simeq f^K.
$$
If we write $f\colon A \to B$ as an inclusion, the iterated pushout-product $f^{\square^n}$ can be described as the inclusion map
$$
f^{\square^n}\colon \bigcup_{i=1}^n B^{i-1} \times A \times B^{n-i} \hookrightarrow B^n.
$$
We prove first that $\alpha^*(B^n)^H\simeq B^K$. Consider the map $p\colon B^K \to B^n$ that sends a $K$-fixed point $x$ to the element $p(x)=(h_1x,\ldots,h_nx)$ with $e_H=h_1,\ldots,h_n$ representatives in $H$ of the cosets in $H/K$ with the ordering given by the chosen isomorphism $G/H \simeq \{1,\ldots,n\}$. The element $p(x)$ is an $H$-fixed point, moreover, $p$ induces an isomorphism $\alpha^*(B^n)^H\simeq B^K$ where the inverse of $p$ is given by the projection $\alpha^*(B^n)^H \to B^K$ to the first component. Finally, using a similar argument, we have that 
$$
\alpha^* \left(\bigcup_{i=1}^n B^{i-1} \times A \times B^{n-i} \right)^H\simeq  A^K
$$
and that the map $\alpha^*\left(f^{\square^n}\right)^H$ corresponds to the inclusion $f^K \colon A^K \hookrightarrow B^K$. In particular, we obtain that $\alpha^*(f^{\square^n})^H$ is a cofibration as $f$ is a fixed cofibration. Moreover, we have that
\begin{align*}
& \colim \left( \alpha^*\left(f^{\square^{n}}\right)^H  \right) \\
\simeq & \colim \left(f^K \right)\simeq (\colim f)^K 
\simeq   \left( \alpha^*\left((\colim f)^{\square^{n}}\right)  \right)^H\simeq \left( \colim  \alpha^*\left(f^{\square^{n}}\right)  \right)^H
\end{align*}
and it follows that $\alpha^*\left(f^{\square^{n}}\right)$ is a fixed cofibration.
\end{proof}
\end{proposition}

\begin{construction}\label{consquo}
Let $\Theta$ be a finite group, let $X$ be a $(G\times \Theta)$-set with a free $\Theta$-action and let $Y$ be in $\fun(I,\sSet)^{G\times \Theta}$. We will now describe the quotient $\left( X\times Y\right)_{\Theta}$ as a $G$-object in $\fun(I,\sSet)$. Choose first a set $R$ of representatives in $X$ of the classes in $X_{\Theta}$. For every representative $x$ and every $g$ in $G$, there is a unique representative $y$ and a unique $\theta \in \Theta$ such that $gx=\theta y$. The uniqueness of $\theta$ is given by the fact that the $\Theta$-action on $X$ is free. We now construct a $G$-action on $X_{\Theta}\times Y$. For every $g$ in $G$, the associated map $g\cdot(-)\colon X_{\Theta}\times Y \to X_{\Theta}\times Y$ sends the $a$-copy of $Y$ to the $(g\cdot a)$-copy of $Y$ and is given on $Y$ by the map $(\theta,g)\cdot(-)$ where $\theta$ is the unique element of $\Theta$ such that $g\cdot x = \theta \cdot y$ with $x$ and $y$ the representatives of $a$ and $g\cdot a$ respectively. There is a $G$-equivariant map $X\times Y \to X_{\Theta}\times Y$ that sends $(a,y)$ to $(x,\theta^{-1} y)$ with $x$ the representative of $a$ and $\theta$ the unique element of $\Theta$ such that $\theta x=a$. This map exhibits $X_{\Theta}\times Y$ with the $G$-action that we described above as the quotient $(X\times Y)_{\Theta}$. Let $H$ be a subgroup of $G$. We now describe the $H$-fixed points of $\left( X \times Y\right)_{\Theta}$. For every $H$-fixed point $a$ in $X_{\Theta}$ represented by $x$, the map $\alpha_{x}\colon H \to \Theta$ that sends an element $h$ to the unique $\theta$ in $\Theta$ such that $hx=\theta x$ is a homomorphism. We now obtain that the $H$-fixed points of $(X \times Y)_{\Theta}$ is given by
$$
\bigsqcup_{x \in X_{\Theta}} \alpha_x^*\left(Y \right)^H.
$$
Note that this construction also applies to maps in $\fun(I,\sSet)^{G\times \Theta}$.
\end{construction}

\begin{proposition}\label{propcelfixcof}
Every cellular cofibration in $\fun(I,\sSet)^{G\times \Sigma_n}$ for the graph model structure is a strong fixed cofibration.
\begin{proof}
Using \ref{staco}, it is enough to prove the result for generating cofibrations.
Consider the cofibration $f=T \times f'$ with $T$ a finite $G\times \Sigma_n$-set with a free $\Sigma_n$-action and $f' \colon A \to B$ a cofibration in $\fun(I,\sSet)$. Using \ref{consquo}, we have that $\left((f\square \bigpp_{\sP} g)_{\Sigma_{\sP}}\right)^H$ is the map
$$
\bigsqcup_{x\in T_{\Sigma_{\sP}}^H} f'\square \alpha_{x}^* \left(\bigpp_{\sP} g\right)^H 
$$
which is a cofibration as a coproduct of pushout-products of cofibrations using \ref{twistedpushprod}. We now have to prove that the colimit of $(f\square \bigpp_{\sP} g)_{\Sigma_{\sP}}$ commutes with $H$-fixed points. We have that
\begin{align*}
&\colim \left( \bigsqcup_{x\in T_{\Sigma_{\sP}}^H} f'\square \alpha_{x}^* \left(\bigpp_{\sP} g\right)^H  \right)  \\
\simeq& \bigsqcup_{x\in T_{\Sigma_{\sP}}^H} \colim(f')\square \left(  \alpha_{x}^* \left(\bigpp_{\sP} \colim g\right)\right)^H \simeq \left(\colim \left(f\square \bigpp_{\sP} g\right)_{\Sigma_{\sP}}\right)^H
\end{align*}

using \ref{twistedpushprod},  the fact that $\square$ preserves colimits in both variables, and the fact that $I$ is sifted. 
\end{proof}
\end{proposition}

\begin{proposition}\label{idk2}
Let $\cC$ be a bicomplete category,
let $\alpha \colon G \to H$ be a group homomorphism, and let $K$ be a subgroup of $H$. The functor $(\alpha_!(-))_{K}$ is the functor that sends a object $X$ in $\cC^G$ to the following object in $\cC$
$$
\bigsqcup_{\alpha(G)hK\in \alpha(G) \backslash H/K} X_{\alpha^{-1}(hKh^{-1})}.
$$
\begin{proof}
We compute the right adjoint of this functor. It sends an object $Y$ of $\cC$ to the image under $\alpha^*$ of $\hom (H/K, Y)$. The latter object if the same as $\hom (\alpha^*(H/K), Y)$. Moreover, we have that
$$
\alpha^* (H/K)\simeq \bigsqcup_{\alpha(G)hK\in \alpha(G) \backslash H/K} G/\alpha^{-1}\left(hKh^{-1}\right). 
$$
We deduce the right adjoint of the functor we care about sends an object $Y$ of $\cC$ to 
$$
\prod_{\alpha(G)hK\in \alpha(G) \backslash H/K} \hom \left(G/\alpha^{-1}\left(hKh^{-1}\right), Y \right)
$$
which implies that the image of a $G$-object $X$ in $\cC$ under $(\alpha_!(X))_K$ is
$$
\bigsqcup_{\alpha(G)hK\in \alpha(G) \backslash H/K} X_{\alpha^{-1}(hKh^{-1})}.
$$
\end{proof}
\end{proposition}

\begin{proposition}\label{idk}
Let $\cC$ be a cocomplete and closed monoidal category. For every group morphism $\alpha\colon G \to H$, for every $X$ in $\cC^G$, and for every $Y$ in $\cC^H$, there is an isomorphism

\begin{equation}\label{equidk}
\alpha_!(X\otimes \alpha^*Y)\simeq  \alpha_!X\otimes Y.  
\end{equation}

\begin{proof}
For every $Y$ in $\cC^H$, both sides of \ref{equidk} give functors $\cC^G \to \cC^H$ and we compute their right adjoints. For the right-hand side, the right adjoint is given by the functor that sends an object $Z$ in $\cC^H$ to 
$\alpha^*\hom(Y,Z)$. For the left-hand side, the right adjoint is the functor that sends $Z$ to $\hom(\alpha^*Y, \alpha^*Z)$. The result follows from the fact that there is a natural isomorphism 
$$
\hom(\alpha^*Y, \alpha^*Z)\simeq \alpha^*\hom(Y,Z).
$$
\end{proof}
\end{proposition}

\begin{proposition}
For every strong fixed cofibrant operad $\cO$ in $\fun(I,\sSet)^G$, for every strong fixed cofibration $u\colon A \to B$ in $\coll{}(\fun(I,\sSet)^G)$ and for every pushout square
$$
\begin{tikzcd}
F(A) \arrow[r] \arrow[d,"F(u)"'] & \cO \arrow[d,"f"] \\
F(B) \arrow[r] & \cP,
\end{tikzcd}
$$
the morphism $U(f)$ is a strong fixed cofibration. In particular, $\cP$ is also strong fixed cofibrant.
\begin{proof}
We recall first from \cite[Proposition 5.2]{PS1} the construction of the $\Sigma_n$-morphism $f(n)\colon \cO(n) \to \cP(n)$. The morphism $f(n)$ is obtained as the transfinite composition
$$
\cO=\cP_0(n) \overset{f_0(n)}{\longrightarrow} \cP_1(n) \overset{f_1(n)}{\longrightarrow} \cP_{2}(n) \overset{f_{2}(n)}{\longrightarrow} \cdots
$$
where every $f_k(n)$ is defined by induction as a pushout
$$
\begin{tikzcd}
\bigsqcup_{T} \Sigma_n \times_{\operatorname{Aut}(T)} x^*(T) \arrow[r] \arrow[d,"\bigsqcup_{T} \Sigma_n \times_{\operatorname{Aut}(T)} \epsilon(T)"'] & \cP_k(n) \arrow[d,"f_k(n)"] \\ 
\bigsqcup_{T} \Sigma_n \times_{\operatorname{Aut}(T)} x(T) \arrow[r] & \cP_{k+1}(n)
\end{tikzcd}
$$
where the coproducts run over all isomorphism classes of admissible marked trees with $n$ leaves and $k$ marked vertices, and where the $\operatorname{Aut}(T)$-morphism $\epsilon(T)\colon x^*(T) \to x(T)$ is inductively defined as 
$$
\epsilon(T):= \epsilon(l)\square \bigpp_{i=1}^l \epsilon(T_i)
$$
with $l$ the arity of the root of $T$ and with $\epsilon(l) \in \fun(I,\sSet)^{G\times \Sigma_{l}}$ defined as 
$$ \epsilon(l)=
\begin{cases}
u(l)   & \text{if $l$ is marked}; \\
\eta_{\cO}(l)  & \text{if $l$ is not marked},
\end{cases}
$$
with $\eta_{\cO}\colon 1_{\cC} \to \cO$ the morphism given by the identity of $\cO$.

Using \ref{stabcof}, it is enough to prove  that $\Sigma_n \times_{\operatorname{Aut}(T)} \epsilon(T)$ is a strong fixed cofibration for every marked tree $T$.
Let $\sP=(U_1,\ldots, U_j)$ be a partition of $\{1,\ldots, n\}$ with $|\sP_i|=n_i$. For every tuple $(f_1,\ldots,f_j)$ of fixed cofibrations, we have to prove that
$$
\left (\alpha_!\left(\epsilon(T)\right) \square \bigpp_{\sP} f \right)_{\Sigma_{\sP}}
$$
is a fixed cofibration. Using \ref{idk2}, \ref{idk} and the fact that the conjugation of a subgroup of the form $\Sigma_{\sP}$ in $\Sigma_n$ is also of this form, it is enough to prove that
$$
\left(\epsilon(T) \square \bigpp_{\sP} f\right)_{\operatorname{Aut}_{\sP}(T)}
$$
is a fixed cofibration, where the $\operatorname{Aut}_{\sP}(T)$-action on $\bigpp_{\sP} f$ is given by precomposing with the homomorphism $\operatorname{Aut}_{\sP}(T) \to \Sigma_{\sP}$. We will prove the result by grafting induction on $T$ as a partitioned tree. Suppose $T=t_m(T_1,\ldots, T_m)$ such that we are in the situation of \ref{decomptree}.
We have 
$$
\epsilon(T)=\epsilon(m)\pp \bigpp_{i=1}^k \epsilon(T_i)^{\square^{m_i}}
$$
and we can write $\epsilon(T) \square \bigpp_{\sP} f$ as
$$
\epsilon(m)\pp \bigpp_{i=1}^k  \left( \epsilon(T_i) \square \bigpp_{\sP_i}f \right)^{\square^{m_i}}.
$$
We deduce that
$$
\left( \epsilon(T) \square \bigpp_{\sP}f \right)_{\operatorname{Aut}_{\sP}(T)} \simeq 
\left( \epsilon(m)\pp \bigpp_{i=1}^k  \left( \left( \epsilon(T_i) \square \bigpp_{\sP_i}f \right)_{\operatorname{Aut}_{\sP}(T_i)}\right)^{\square^{m_i}} \right)_{\prod_{i=1}^k \Sigma_{m_i}},
$$
using the description of $\operatorname{Aut}_{\sP}(T)$ of \ref{decomptree}.
Finally, we deduce that $\left( \epsilon(T) \square \bigpp_{\sP}f \right)_{\operatorname{Aut}_{\sP}(T)}$ is a fixed cofibration because $\epsilon(m)$ is a strong fixed cofibration and because $ \left( \epsilon(T_i) \square \bigpp_{\sP_i}f \right)_{\operatorname{Aut}(T_i)}$ is a fixed cofibration by induction.
\end{proof}
\end{proposition}

\begin{proposition}\label{precof}
For every strong fixed cofibrant operad $\cO$ in $\fun(I,\sSet)^G$ and for every subgroup $H$ of $G$, the $H$-fixed points functor $(-)^H$ given by the composition
$$
\Algg_{\cO}(\fun(I,\sSet)^G) \overset{U}{\longrightarrow} \fun(I,\sSet)^G \overset{(-)^H}{\longrightarrow} \fun(I,\sSet)
$$
preserves cofibrant objects. Furthermore, for every cellular algebra $F$ in $\Algg_{\cO}(\fun(I,\sSet)^G)$, the colimit of $U(F)\colon I \to \sSet^G$ commutes with $H$-fixed points.
\begin{proof}
Let $\cA$ be a cellular algebra over $\cO$. Using \cite[Proposition 5.4]{BM03}, there is a morphism of operads $\cO \to \cO [A]$ such that $\cO [A](0)=A$. Moreover, $\cO \to \cO [A]$ is a sequential colimit of cellular extension by strong fixed cofibrations and it follows that $\cO \to \cO [A]$ is a strong fixed cofibration. In particular, because $\cO$ is strong fixed cofibrant, $\cO[A]$ is also strong fixed cofibrant. Finally, because $A=\cO [A](0)$, we have that $A^H$ is cofibrant for every subgroup $H$ of $G$ and the colimit of $A$ commutes with $H$-fixed point. Finally, if $B$ is a retract of a cellular algebra $A$, then $B^H$ is a retract of $A^H$ and it follows that $B^H$ is cofibrant.
\end{proof}
\end{proposition}

\begin{proposition}\label{proppresifhoco}
For every operad $\cO$ in $\sSet^G$ such that the $\Sigma_n$-action on $\cO_n$ is free, the fixed point functor 
$$
\Algg_{\cO}(\sSet^G) \overset{U}{\longrightarrow} \sSet^G \overset{(-)^H}{\longrightarrow} \sSet
$$
preserves sifted homotopy colimits.
\begin{proof}
Suppose that $I$ is a homotopy sifted category (in particular, $I$ is sifted). As pointed out in \cite[Proposition 7.9]{PS1}, we can compute all sifted homotopy colimits by only considering homotopy sifted categories with finite coproducts. We can thus assume that $I$ admits coproducts. For every $n$, using \cite{STE}, $\cO_n$ is a cellular object in $\sSet^{G\times \Sigma_n}$ for the graph model structure (and for n=1, the identity map $* \to \cO(1)$ is also cellular). In particular, $\cO_n$ is also cellular when seen as a constant functor in $\fun (I,\sSet)^{G\times \Sigma_n}$ and this implies that $\cO$ is strong fixed cofibrant as an operad in $\fun(I,\sSet)^G$ using \ref{propcelfixcof}.
Finally, we have an equivalence $\Algg_{\cO}(\fun(I,\sSet)^G)\simeq \fun (I,\Algg_{\cO}(\sSet^G))$ and the model structure on $\Algg_{\cO}(\fun(I,\sSet)^G)$ corresponds to the projective model structure on $\fun (I,\Algg_{\cO}(\sSet^G))$. Consider now $F\colon I \to \Algg_{\cO}(\sSet^G)$ a cellular functor. As $F$ is strong fixed cofibrant, the functor $U(F)^H$ is cofibrant and $U(-)^H$ preserves the colimit of $F$ because $U$ preserves sifted colimits and because $(-)^H$ preserves the colimit of $U(F)$. The result now follows from the fact that homotopy colimits can always be computed as strict colimits of cellular functors.
\end{proof}
\end{proposition}

In particular, we obtain the following result on the associated $\infty$-categories, where we denote by $\mathbf{Alg}_{\cO}$ the $\infty$-category obtained from $\Algg_{\cO}(\sSet)$ by inverting $G$-weak homotopy equivalences.
\begin{corollary}\label{nsifco}
For every operad $\cO$ in $\sSet^G$ such that the $\Sigma_n$-action on $\cO_n$ is free, the forgetful functor
$$
U\colon \mathbf{Alg}_{\cO} \to \sS^G
$$
preserves sifted colimits.
\begin{proof}
Using that colimits in a presheaf category can be computed component-wise, it is enough to show that the composition 
$$
U\colon \mathbf{Alg}_{\cO} \overset{U}{\longrightarrow} \sS^G \overset{\operatorname{ev}_{G/H}}{\longrightarrow} \sS
$$
preserves sifted colimits for every subgroup $H\subseteq G$.
This functor is the right derived functor of the composition 
$$
\Algg_{\cO}(\sSet^G) \overset{U}{\longrightarrow} \sSet^G \overset{(-)^H}{\longrightarrow} \sSet
$$
and, combining \cite[Proposition 5.3.1.16]{MR2522659}, \cite[Corollary 5.5.8.17]{MR2522659} and \cite[Theorem 1.3.4.24]{HA} with \ref{proppresifhoco}, the functor $U:\mathbf{Alg}_{\cO} \to \sS^G$ preserves sifted colimits.
\end{proof}
\end{corollary}

The previous result applies in particular to algebras over an $N_{\infty}$-operad.

\begin{corollary}
For every $N_{\infty}$-operad $\cO$ in $\sSet^G$, the forgetful functor
$$
U\colon \mathbf{Alg}_{\cO} \to \sS^G
$$
preserves sifted colimits.
\end{corollary}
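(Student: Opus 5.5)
The last corollary will be obtained from \ref{nsifco} by checking its single hypothesis: for every $N_{\infty}$-operad $\cO$ in $\sSet^G$ and every $n\ge 0$, the $\Sigma_n$-action on $\cO_n$ is free. Once this holds, \ref{nsifco} applies verbatim and shows that $U\colon \mathbf{Alg}_{\cO} \to \sS^G$ preserves sifted colimits. No further homotopical input is needed, since the heavy lifting (strong fixed cofibrancy of $\cO$ via \ref{propcelfixcof}, and preservation of sifted homotopy colimits by each fixed point functor via \ref{proppresifhoco}) is already done for arbitrary $\Sigma$-free $G$-operads.

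First I recall the definition of an $N_{\infty}$-operad from \cite{BH1}. It is a $G$-operad $\cO$ in $\sSet^G$ (equivalently, a $G\times\Sigma_n$-space in each arity) such that:
\begin{enumerate}
    \item $\cO_0$ is $G$-contractible;
    \item $\Sigma_n$ acts freely on $\cO_n$;
    \item $\cO_n$ is a universal space for a family $\cF_n$ of graph subgroups of $G\times\Sigma_n$ containing all $H\times\{e\}$.
\end{enumerate}
Condition (3) means that for $\Gamma\subseteq G\times\Sigma_n$, the space $\cO_n^\Gamma$ is contractible if $\Gamma\in\cF_n$ and empty otherwise.

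Freeness of the $\Sigma_n$-action is then immediate. It is either part of the definition, or follows from condition (3): a nontrivial element $\sigma\in\Sigma_n$ generates a subgroup $\{e\}\times\langle\sigma\rangle$, which is not a graph subgroup (it meets $\{e\}\times\Sigma_n$ nontrivially), so it is not in $\cF_n$. Hence $\cO_n^{\langle\sigma\rangle}=\emptyset$, which says exactly that no simplex of $\cO_n$ is fixed by $\sigma$. Note also that the paper's argument in \ref{countexninf} already uses that the $\Sigma_2$-action on $\cO_2$ is free, consistent with this.

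The only possible obstacle is a bookkeeping one: making sure that the $\infty$-category $\mathbf{Alg}_{\cO}$ in the statement is the same localisation used in \ref{nsifco}, namely $\Algg_{\cO}(\sSet^G)$ with the maps that forget to $G$-weak equivalences inverted. This is a matter of notation rather than mathematics. With it settled, the corollary follows directly by invoking \ref{nsifco}.
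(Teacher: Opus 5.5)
Your proposal is correct and is the paper's argument: the paper derives this corollary in one line from \ref{nsifco}, using that the $\Sigma_n$-action on $\cO_n$ is free for any $N_{\infty}$-operad. One small caveat: your alternative derivation of freeness from condition (3) is non-circular only because you wrote ``graph subgroups'' into (3), whereas in Blumberg--Hill the graph property of the family follows from freeness, so citing axiom (2) directly is the cleaner route.
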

\bibliographystyle{amsalpha}
\bibliography{references}

\end{document}